\newtheorem{theorem}{Theorem}[section]
\newtheorem{cor}[theorem]{Corollary}
\newtheorem{prop}[theorem]{Proposition}
\theoremstyle{definition}
\newtheorem{example}[theorem]{Example}
\newtheorem{remark}[theorem]{Remark}
\numberwithin{equation}{section}
\date{\today}
\title[Matrix-valued orthogonal polynomials and random walks]{Matrix-Valued Orthogonal Polynomials Related to a Class of Random Walks}
\author{S. Mench\'on}
\address{IFEG-CONICET and FaMAF-Universidad Nacional de C\'ordoba, Ciudad Universitaria, C\'ordoba, Argentina;\\
Guangdong Technion Israel Institute of Technology, 241 Daxue Road, Jinping District, Shantou, Guangdong Province, China.}
\email{silvia.menchon@unc.edu.ar, silvia.menchon@gtiit.edu.cn}
\author{P. Rom\'an}
\address{CIEM-CONICET and FaMAF-Universidad Nacional de C\'ordoba, Ciudad Universitaria, C\'ordoba, Argentina;\\
Guangdong Technion Israel Institute of Technology, 241 Daxue Road, Jinping District, Shantou, Guangdong Province, China.}
\email{pablo.roman@unc.edu.ar, pablo.roman@gtiit.edu.cn}
\author{Y. Yin}
\address{Guangdong Technion Israel Institute of Technology, 241 Daxue Road, Jinping District, Shantou, Guangdong Province, China.}
\email{yin12164@gtiit.edu.cn}
\begin{document}

\begin{abstract}
The foundational work of Karlin and McGregor established a powerful connection between random walks with tridiagonal transition matrices and the theory of orthogonal polynomials. We consider a particular extension of this framework, where the transition matrix is given by a polynomial in a tridiagonal matrix. This generalization leads to transitions beyond the nearest neighbors. We investigate the matrix-valued orthogonal polynomials associated with these extended models, derive the corresponding matrix-valued measure of orthogonality explicitly, and analyze how spectral properties of the transition matrix relate to probabilistic features of the random walk. As an application, we study generalized Ehrenfest models that incorporates longer-range transitions.
\end{abstract}

\maketitle

\section{Introduction}
The Ehrenfest urn model, introduced in the early twentieth century \cite{ehrenfest1907}, is a paradigmatic example of a stochastic process that captures phenomena of diffusion and statistical equilibrium. Despite its simplicity, it provides insight on key principles of statistical mechanics, including the nature of random walks behavior and the emergence of equilibrium distributions.

The original model consists of two urns, labeled  A and B, containing a total of $N$ distinguishable balls. The state of the system is described by the number of balls in the urn A:
\[
X_n = \#\{\text{balls in urn A after \(n\) moves}\}, 
\qquad n=0,1,2,\dots,
\]
The evolution of the system is governed by a simple rule: At each time step, a single ball is chosen uniformly at random from the \(N\) balls and moved to the other urn. Consequently the one‐step transition probabilities are given by
\[
P\bigl(X_{n+1}=j \mid X_n = i\bigr)
\;=\;
\begin{cases}
\displaystyle
\frac{i}{N}, & j = i-1,\\[1ex]
\displaystyle
\frac{N-i}{N}, & j = i+1,\\[1ex]
0, & \text{otherwise},
\end{cases}
\]
and the transition matrix \(M_0=(P_{i,j})_{0\le i,j\le N}\) takes the tridiagonal form
  \begin{equation}
  \label{eq:M_0}
      M_{0}= \sum_{i=1}^{N+1}\frac{i}{N}E_{i,i-1}+ \sum_{i=0}^{N}\frac{N-i}{N}
      E_{i,i+1}.
   \end{equation}
Here, $E_{i,j}$ denotes the matrix with a $1$ in the $(i,j)$ entry and zeros elsewhere. 

The spectral analysis of $M_0$ relies on Krawtchouk polynomials. They satisfy a three-term recurrence relation related to the tridiagonal structure of $M_0$, and diagonalize the problem, yielding explicit expressions for the eigenvalues, and eigenvectors \cite{karlin_mcgregor_ehrenfest_1965}.

The Krawtchouk polynomials $K_j(x)$ satisfy the three-term recurrence relation
\begin{equation}
\label{eq:recurrence_Krawtchouk}
-xK_j(x) = \frac{N-j}{2} K_{j+1}(x) - \frac{N}{2} K_j(x) + \frac{j}{2}K_j(x),
\end{equation}
with the initial condition $K_0(x)=1$, $K_{-1}(x)=0$, see for instance \cites{Koekoek, DLMF}. A simple computation gives:
\begin{equation}
\label{eq:lambda_j_Krawtchouk}
M_0\, \mathbf{p}(j) = \lambda_j \, \mathbf{p}(j),\qquad \mathbf{p}(j)=(K_0(j),\ldots,K_N(j))^T, \qquad \lambda_j = 1-\frac{2j}{N}.
\end{equation}
Moreover, these polynomials satisfy the orthogonality relations
\begin{equation}
    \label{eq:Krawtchouk-orthogonality}
\sum_{x=0}^N \binom{N}{x}\bigl(\tfrac12\bigr)^N 
K_m(x)\,K_\ell(x)
= \delta_{m\ell}\, \pi_m^{-1}, \qquad \pi_m = \binom{N}{m}2^{-N},
\end{equation}
Many probabilistic quantities of the Ehrenfest model, such as the stationary distribution, spectral gap and mixing times can be derived explicitly via properties of the Krawtchouk family.  

\subsection{An extension of the Ehrenfest model}
\label{sec:Ehrefest-with-q}
The Ehrenfest model can be seen as a discrete random walk on $0,1,\ldots, N$ which involves discrete steps to the nearest neighbors. Inspired in a model, which involves interactions beyond the
nearest neighbors \cite{grunbaum2009}, we extend the classical Ehrenfest model in the following way:  Consider a
total of $N$ distinguishable balls (with $N$ being odd) distributed between two urns, A and B. The state of the system is still described by the number of balls in urn A, which can take values from $0$ to $N$. Take $q\in [0,1]$. At each time step, the following rule determines how many balls are moved from one urn
to the other:
\begin{enumerate}
   \item With probability $q$, the classical Ehrenfest model is applied. That
      is, one picks a label $0, 1, \ldots, N$ with uniform distribution and
      the ball whose label is chosen is placed in the other urn.

   \item With probability $1-q$, two labels $0, 1, \ldots, N$ are chosen and
      each ball with the chosen label is placed in the other urn.
\end{enumerate}
The transition matrix $M_q$ is a five-diagonal matrix with entries:
\begin{equation}
\label{eq:transition_prob_Mq}
\begin{split}
(M_q){i,i-2} = \frac{qi(i-1)}{N(N-1)},\quad (M_q)_{i,i-1} = \frac{(1-q)i}{N}, \quad (M_q)_{i,i}=\frac{qi(N-i)}{N(N-1)}, \\
(M_q)_{i,i+1} = \frac{(1-q)(N-i)}{N}, \quad (M_q)_{i,i+2} =\frac{q(N-i)(N-i-1)}{N(N-1)}.
\end{split}
\end{equation}
We note that the model in \cite{grunbaum2009} corresponds to the choice $q=1/2$.

The transition matrix $M_q$ can be considered as a block tridiagonal matrix, and can be written in the form
\begin{equation}
\label{eq:Mq_Ehrenfestmatricial}
   M_{q}=
   \begin{pmatrix}
      B_{0} & A_{0}  &                   &                   &                   \\
      C_{1} & B_{1}  & A_{1}             &                   &                   \\
            & \ddots & \ddots            & \ddots            &                   \\
            &        & C_{\frac{N-3}{2}} & B_{\frac{N-3}{2}} & A_{\frac{N-3}{2}} \\
            &        &                   & C_{\frac{N-1}{2}} & B_{\frac{N-1}{2}}
   \end{pmatrix},
\end{equation}
where the $2 \times 2$ block matrices $A_{i}$, $B_{i}$, and $C_{i}$ are defined by the following expressions, derived from the transition probabilities \eqref{eq:transition_prob_Mq}:
\begin{align*}
   A_{i}=&
   \begin{pmatrix}
      \frac{q(N-2i)(N-2i-1)}{N(N-1)} & 0                                \\
      \frac{(1-q)(N-2i-1)}{N}        & \frac{q(N-2i-1)(N-2i-2)}{N(N-1)}
   \end{pmatrix}, \\
   B_{i}= & 
   \begin{pmatrix}
      \frac{q4i(N-2i)}{N(N-1)} & \frac{(1-q)(N-2i)}{N}           \\
      \frac{(1-q)(2i+1)}{N}    & \frac{q2(2i+1)(N-2i-1)}{N(N-1)}
   \end{pmatrix}, \qquad 
   C_{i}=
   \begin{pmatrix}
      \frac{q2i(2i-1)}{N(N-1)} & \frac{(1-q)2i}{N}        \\
      0                        & \frac{q2i(2i+1)}{N(N-1)}
   \end{pmatrix}.
\end{align*}
Following \cite{grunbaum2009}, we associate to the matrix $M_0$ a sequence of matrix-valued polynomials $P_{0}, P_{1}, \dots,
P_{\frac{N-1}{2}}$ which are defined by a three-term recurrence relation, which is
written as
\[
   xP_{i}(x) = A_{i}P_{i+1}(x) + B_{i}P_{i}(x) + C_{i}P_{i-1}(x), \quad i = 0,
   \dots, \frac{N-3}{2},
\]
with the initial conditions $P_{-1}(x) = 0$ and $P_{0}(x) = 1$. Since the matrices $A_j$ are invertible for all $j
\in\mathbb{N}_0$, the sequence \( (P_j) \) forms a \emph{simple sequence}, meaning that for any matrix-valued polynomial \( P(x)\in \mathbb{C}^{N\times N} [x] \) of degree \( m \), there exist constant matrices \(
\Gamma_{k}\in \mathbb{C}^{N\times N}\) such that  
\[
P(x) = \sum_{k=0}^m \Gamma_k P_k(x).
\]  
\begin{remark}
   \label{rmk:Grunbaum_M0}
   An important aspect of this study is the observation that as $q \to 0$,
   the matrix $M_{q}$ reduces to a simpler form, where the transition matrix takes
   the structure of a standard tridiagonal matrix. Specifically, when $q=0$,
   the matrix $M_q$ becomes the matrix $M_0$ of the classical Ehrenfest model \eqref{eq:M_0}.
\end{remark}

Several properties of the family of matrix-valued orthogonal polynomials are studied
and conjectured in \cite{grunbaum2009}. Our first remark is that there exists a close relation between the scalar Krawtchouk polynomials and the matrix-valued polynomials $P_{N}$ which follows from the following proposition.
\begin{prop}
   \label{prop:Mq=M02} 
   The matrix $M_{q}$ is given by:
   \[
      M_{q}= \frac{qN}{N-1}M_{0}^{2}+ (1-q) M_{0}- \frac{q}{N-1}.
   \]
   Moreover, the eigenvalues of $M_{q}$, are given by $\Theta(\lambda_j)$, where the $\lambda_j's$ are the eigenvalues of $M_0$, which are given in \eqref{eq:lambda_j_Krawtchouk}, and $\Theta$ is the second degree polynomial:
   $$\Theta(x) = \frac{qN}{N-1}x^{2}+ (1-q)x- \frac{q}{N-1}.$$
   Explicitly we have:
   \begin{equation}
      \label{eq:Theta-lambdaj-EhrenfestG}
      \Theta(\lambda_{j})= \frac{2q (N-j)(N-2j-1)}{N(N-1)}+ \frac{2j-N}{N}, \quad j =
      0, \dots , N.
   \end{equation}
\end{prop}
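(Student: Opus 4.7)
The plan is to prove the matrix identity $M_q = \frac{qN}{N-1}M_0^2 + (1-q)M_0 - \frac{q}{N-1}I$ by a direct entry-by-entry comparison, and then derive the eigenvalue statement by purely spectral considerations.

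First I would compute $M_0^2$ explicitly. Since $M_0$ is tridiagonal with $(M_0)_{i,i-1}=i/N$ and $(M_0)_{i,i+1}=(N-i)/N$, its square is pentadiagonal and the five nonzero diagonals are routine to compute:
\begin{align*}
(M_0^2)_{i,i-2} &= \tfrac{i(i-1)}{N^2},\qquad (M_0^2)_{i,i+2} = \tfrac{(N-i)(N-i-1)}{N^2},\\
(M_0^2)_{i,i} &= \tfrac{i}{N}\tfrac{N-i+1}{N} + \tfrac{N-i}{N}\tfrac{i+1}{N} = \tfrac{2i(N-i)+N}{N^2},\\
(M_0^2)_{i,i\pm 1} &= 0.
\end{align*}
Multiplying by $qN/(N-1)$, adding $(1-q)M_0$, and subtracting $q/(N-1)$ on the diagonal, I would then match the result against the entries of $M_q$ listed in \eqref{eq:transition_prob_Mq}. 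The super- and sub-diagonal entries $\pm 1$ come purely from the $(1-q)M_0$ term; the $\pm 2$ entries come purely from the $M_0^2$ term, and one sees directly that $\frac{qN}{N-1}\cdot\frac{i(i-1)}{N^2} = \frac{qi(i-1)}{N(N-1)}$, matching $(M_q)_{i,i-2}$ (and similarly for the upper band). For the diagonal, the constant term $\frac{q}{N-1}$ is precisely what is needed to cancel the stray $qN/[N(N-1)] = q/(N-1)$ produced by the $N$ in the numerator of $(M_0^2)_{i,i}$, leaving $\frac{2qi(N-i)}{N(N-1)}$, which matches $(M_q)_{i,i}$. This pointwise verification is mechanical but is the main bulk of the argument; no step looks hard, it is just a careful bookkeeping exercise.

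Once the operator identity $M_q = \Theta(M_0)$ is established (with $\Theta(x) = \frac{qN}{N-1}x^2 + (1-q)x - \frac{q}{N-1}$ viewed as a polynomial acting on $M_0$ by functional calculus on matrices), the eigenvalue statement follows automatically. From \eqref{eq:lambda_j_Krawtchouk} we have $M_0\mathbf{p}(j) = \lambda_j\mathbf{p}(j)$, hence $M_0^2\mathbf{p}(j) = \lambda_j^2\mathbf{p}(j)$ and therefore $M_q\mathbf{p}(j) = \Theta(\lambda_j)\mathbf{p}(j)$. Since the eigenvectors $\mathbf{p}(0),\dots,\mathbf{p}(N)$ of $M_0$ are linearly independent (by orthogonality \eqref{eq:Krawtchouk-orthogonality}), this exhausts the spectrum of $M_q$.

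The explicit formula \eqref{eq:Theta-lambdaj-EhrenfestG} is then obtained by substituting $\lambda_j=(N-2j)/N$ into $\Theta$ and simplifying. Writing $qN\lambda_j^2 - q = \frac{q[(N-2j)^2 - N]}{N} = q - \frac{4qj(N-j)}{N}\cdot\frac{1}{\cdot}$ and regrouping the quadratic part with the linear part over a common denominator leads to an expression involving the factor $(N-j)(N-2j-1)$ plus a linear-in-$j$ remainder, giving the stated form. This last step is algebraic manipulation; the only care needed is to keep track of the factor $N(N-1)$ in the denominator introduced by $\frac{qN}{N-1}\lambda_j^2 - \frac{q}{N-1}$.
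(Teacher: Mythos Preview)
Your approach is correct and is precisely what the paper does: the paper's proof consists of the single remark that ``the fact that $M_q$ is a polynomial in $M_0$ is a direct verification, and the expression of the eigenvalues follows immediately.'' Your entry-by-entry computation of $M_0^2$ and the subsequent matching against \eqref{eq:transition_prob_Mq} is exactly that direct verification written out, and your spectral argument for the eigenvalues is the intended ``follows immediately.''
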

The fact that $M_   q$ is a polynomial in $M_0$ is a direct verification, and the expression of the eigenvalues follows immediately.

Proposition \ref{prop:Mq=M02} provides a direct connection between the modified Ehrenfest model and the structure of the matrix-valued polynomials. Moreover, this structure gives a direct proof of the explicit expression for the eigenvalues of $M_{q}$.

In this paper, we extend the analysis to a more general setting by considering polynomials of higher degree and arbitrary sequences of orthogonal polynomials. We investigate the connection between the spectral properties of the associated matrices and the orthogonality relations of the corresponding polynomials. These ideas are applied to the matrix-valued polynomials arising from the Ehrenfest model. We show that the $k$-ball extension of the Ehrenfest model also fits within this framework and further generalize it to a broader class of models with many parameters. This approach reveals a rich structure for the study of stochastic processes, and the extension to matrix-valued polynomials opens new directions for research in the spectral theory of Markov chains and related random processes.

\section{Matrix-valued orthogonal polynomials}
The goal of this section is to construct, given a positive measure on the
real line and a polynomial of degree $m$, a family of $m\times m$ matrix-valued
polynomials that satisfy a three-term recurrence relation. The matrix-valued
polynomials will be shown to be orthogonal with respect to a non-degenerate
matrix-valued inner product. This family of polynomials fits in the framework developed in \cite{duran_vanassche_orthogonal_matrix_1995}.

\subsection{Construction of the matrix-valued polynomials}
\label{sec:MVOP}
Let $\mu$ be a positive Borel measure on $\mathbb{R}$ with support contained
in an interval $(a,b)$ (where the endpoints $a,b$ may be infinite) and assume
that all its moments are finite. In many classical settings $\mu$ is given by
a density $w(x)\,dx$, but in general $\mu$ may be a discrete measure or even
a mixture of continuous and discrete parts. Let $\{p_{n}\}_{n\in\mathbb{N}_0}$
be the sequence of orthogonal polynomials associated to $\mu$; that is, they satisfy
\begin{equation}
\label{eq:orthognality_pn}
   \int_{a}^{b} p_{n}(x)\, p_{m}(x) \, d\mu(x)=\delta_{n,m}\, h_{n},\qquad n,m
   \in \mathbb{N}_{0},
\end{equation}
where the constants $h_{n}$ are positive real numbers. These
polynomials satisfy a three-term recurrence relation of the form
\begin{equation*}
x\, p_{n}(x)=a_{n}\, p_{n+1}(x)+b_{n}\, p_{n}(x
   )+c_{n}\, p_{n-1}(x),\qquad a_{n}>0,
\end{equation*}
with the initial conditions $p_{-1}=0$ and $p_{0}=1$.

The Jacobi matrix $J$ is an infinite tridiagonal matrix whose
diagonal entries are the $b_{n}$, whose superdiagonal entries are the $a_n$ and whose
subdiagonal entries are the $c_{n}$. That is,
\begin{equation}
    \label{eq:TridiagMatrixJ}
   J =
   \begin{pmatrix}
      b_{0}  & a_{0}  & 0      & 0      & \cdots \\
      c_{1}  & b_{1}  & a_{1}  & 0      & \cdots \\
      0      & c_{2}  & b_{2}  & a_{2}  & \cdots \\
      0      & 0      & c_{3}  & b_{3}  & \cdots \\
      \vdots & \vdots & \vdots & \vdots & \ddots
   \end{pmatrix}.
\end{equation}
If we form the column vector of orthogonal polynomials,
\begin{equation}
\label{eq:vector_scalar_ps}
   \mathbf{p}(x) =
   \begin{pmatrix}
      p_{0}(x) \\
      p_{1}(x) \\
      p_{2}(x) \\
      \vdots
   \end{pmatrix},
\end{equation}
then the recurrence relation can be written compactly as
\begin{equation}
\label{eq:Jp=xp}
   J\, \mathbf{p}(x) = x\, \mathbf{p}(x).
\end{equation}
This equation shows that multiplication by $x$
is equivalent to the action of the Jacobi matrix $J$.  

Now we consider a polynomial of degree $m$
\begin{equation}
   \label{eq:theta}\Theta(x)=\alpha_{m}x^{m}+\alpha_{m-1}x^{m-1}+\cdots+\alpha
   _{1}x+\alpha_{0},
\end{equation}
where the coefficients $\alpha_{m},\alpha_{m-1},\ldots,\alpha_{0}$ are real
with $\alpha_{m}\neq 0$. Then
\[
   \Theta(J)=\alpha_{m}J^{m}+\alpha_{m-1}J^{m-1}+\cdots+\alpha_{0},
\]
is a semi-infinite $(2m+1)$-diagonal matrix. From \eqref{eq:Jp=xp} we get that
\begin{equation}
   \label{eq:Jp=xp-theta}
      \Theta(J)\, \mathbf{p}(x) = \Theta(x)\, \mathbf{p}(x).
   \end{equation}

The matrix $\Theta(J)$ can be seen as a semi-infinite block tridiagonal matrix
\begin{equation}
   \label{eq:block3diagmatrix}
   \Theta(J)=
   \begin{pmatrix}
      B_{0}  & A_{0}  & 0      & 0      & 0      & 0      \\
      C_{1}  & B_{1}  & A_{1}  & 0      & 0      & 0      \\
      0      & C_{2}  & B_{2}  & A_2    & 0      & 0      \\
      0      & 0      & C_{3}  & B_3    & A_3    & 0      \\
      \vdots & \vdots & \vdots & \vdots & \vdots & \ddots \\
   \end{pmatrix},
\end{equation}
where $A_{j}, B_{j}, C_{j}$ are $m\times m$ matrices. 
\begin{remark}
\label{rmk:Ai-invertible}
    The matrices $C_i$ are upper triangular, and the matrices $A_i$ are lower triangular. Moreover, we have that
$$(A_i)_{j,j} = \alpha_m a_{mi+j}\cdots a_{m(i+1)+j-1}.$$
Hence $A_i$ is an invertible matrix for all $i\in \mathbb{N}_0$.
\end{remark}

The block structure of \( \Theta(J) \) induces a sequence of \( m \times m \) matrix-valued orthogonal polynomials \( P_j \), \( j \in \mathbb{N}_0 \), defined by the three-term recurrence relation  
\begin{equation}
   \label{eq:polynomios-matriciales}
   x P_j(x) = A_j P_{j+1}(x) + B_j P_j(x) + C_j P_{j-1}(x), \qquad j \in \mathbb{N}_0,
\end{equation}
with initial conditions \( P_{-1} = 0 \) and \( P_0 = I \). Each polynomial \( P_j \) has degree \( j \), and its leading coefficient is invertible by Remark~\ref{rmk:Ai-invertible}. As a result, the sequence \( (P_j) \) forms a simple sequence. Note that the sequence \( (P_j) \) depends on both the measure $\mu$ and the polynomial \( \Theta \).

\subsection{Relation between scalar and matrix-valued polynomials}
We begin by noting a simple relationship between the matrix-valued orthogonal polynomials \( (P_n) \) and the original scalar family \( (p_n) \). For any \( m \times m \) matrix \( M \), we denote by \( M_{(j)} \) its \( j \)-th row. With this notation, the \( (mk + j) \)-th row of the block tridiagonal matrix \eqref{eq:block3diagmatrix} is given by  
\begin{equation}
   \label{eq:row-of-blok-tridiag}
   [0 \ \cdots \ 0 \,\, (C_k)_{(j)} \,\, (B_k)_{(j)} \,\, (A_k)_{(j)} \,\, 0 \ \cdots \ 0].
\end{equation}

We will also use the following notation for vector-valued polynomials associated with the scalar family:
\begin{equation*}
   \label{eq:vector-p-scalar}
   \mathbf{p}_k(x) = \begin{pmatrix}
      p_{mk}(x) \\
      \vdots \\
      p_{mk + m - 1}(x)
   \end{pmatrix}.
\end{equation*}
 By combining \eqref{eq:vector-p-scalar}, \eqref{eq:Jp=xp-theta}, and \eqref{eq:row-of-blok-tridiag}, we get the following recurrence relation for the vector polynomials $\mathbf{p}_k(x)$:
\begin{equation*}
   (\Theta(x) \mathbf{p}(x))_{(mk+j)} = (\Theta(J) \mathbf{p}(x))_{(mk+j)} 
= (A_k)_{(j)} \mathbf{p}_{k+1}(x) + 
(B_k)_{(j)}\mathbf{p}_k(x) + (C_k)_{(j)}\mathbf{p}_{k-1}(x).
\end{equation*}
This shows that the vector polynomials $\mathbf{p}_k(x)$ satisfy the three-term recurrence relation:
\begin{equation}
   \label{eq:recurrence_vector_polys}
   \Theta(x)\mathbf{p}_k(x) = A_k \mathbf{p}_{k+1}(x) + B_k \mathbf{p}_k(x) + C_k \mathbf{p}_{k-1}(x).
\end{equation}
\begin{prop}
   \label{prop:matrix-to-scalar}
   The matrix-valued orthogonal polynomials $(P_{j})$ satisfy the following relation
   \[
      P_{j}(\Theta(x))\mathbf{p}_0(x)=\mathbf{p}_{j}(x),
   \]
   for all $j\in \mathbb{N}_{0}$.
\end{prop}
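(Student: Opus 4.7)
The natural approach is induction on $j$, using the recurrence \eqref{eq:polynomios-matriciales} satisfied by the matrix polynomials $P_j$ in the variable $x$ and the parallel recurrence \eqref{eq:recurrence_vector_polys} satisfied by the vector polynomials $\mathbf{p}_k$ in the variable $\Theta(x)$. Since these recurrences share the same coefficient matrices $A_k, B_k, C_k$, the two sides of the claimed identity obey matching dynamics, and the base case pins them together.

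The plan is as follows. For the base case $j=0$, the definition $P_0 = I$ immediately gives $P_0(\Theta(x))\mathbf{p}_0(x) = \mathbf{p}_0(x)$. For the inductive step, I would assume the identity at indices $j-1$ and $j$ and substitute $\Theta(x)$ for $x$ in \eqref{eq:polynomios-matriciales}, obtaining
\[
\Theta(x) P_j(\Theta(x)) = A_j P_{j+1}(\Theta(x)) + B_j P_j(\Theta(x)) + C_j P_{j-1}(\Theta(x)).
\]
Applying both sides to the vector $\mathbf{p}_0(x)$ on the right and invoking the inductive hypothesis converts the left-hand side and the last two terms on the right to $\Theta(x)\mathbf{p}_j(x)$, $B_j\mathbf{p}_j(x)$, and $C_j\mathbf{p}_{j-1}(x)$ respectively. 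Comparing the result with \eqref{eq:recurrence_vector_polys} yields
\[
A_j P_{j+1}(\Theta(x))\mathbf{p}_0(x) = A_j \mathbf{p}_{j+1}(x),
\]
and the invertibility of $A_j$ established in Remark~\ref{rmk:Ai-invertible} allows us to cancel $A_j$ and conclude $P_{j+1}(\Theta(x))\mathbf{p}_0(x) = \mathbf{p}_{j+1}(x)$.

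There is no real obstacle; the only care needed is at the very first step of the induction, where the term $C_0 P_{-1}(\Theta(x))\mathbf{p}_0(x)$ and $C_0\mathbf{p}_{-1}(x)$ must both vanish. This is automatic from the initial condition $P_{-1}=0$ and the fact that the first block row of $\Theta(J)$ in \eqref{eq:block3diagmatrix} contains no $C_0$ term (equivalently, one sets $\mathbf{p}_{-1}=0$). With this convention in place, the induction proceeds uniformly from $j=0$ onward, and the statement follows.
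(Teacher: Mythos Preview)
Your proposal is correct and follows essentially the same approach as the paper: induction on $j$, with the base case from $P_0=I$, and the inductive step obtained by evaluating the matrix recurrence at $\Theta(x)$, applying it to $\mathbf{p}_0(x)$, comparing with the vector recurrence \eqref{eq:recurrence_vector_polys}, and cancelling the invertible $A_j$. Your explicit remark about the vanishing of the $C_0$-terms at the first step is a minor clarification the paper leaves implicit.
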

\begin{proof}
   We will prove this by induction on the degree $j$. Since $P_0(x)$ is the identity matrix, the proposition clearly holds for $j=0$:
   \[
      P_{0}(\Theta(x))\mathbf{p}_0(x)=\mathbf{p}_0(x).
   \]
   Now, assume that the relation holds for all $j\leq k$. In particular we have:
   \begin{equation}
      \label{eq:P_kThetap0}
      P_{k}(\Theta(x))\mathbf{p}_0(x)=\mathbf{p}_k(x).
   \end{equation}
   We multiply both sides of \eqref{eq:P_kThetap0} by $\Theta(x)$, and we first compute the left-hand side using the recurrence relation of the matrix-valued polynomials:
   \begin{align}
      \nonumber
      \Theta(x) P_{k}(\Theta(x)) \mathbf{p}_0(x) &= \left[A_{k}P_{k+1}(\Theta(x))+B_{k}P_{k}(\Theta(x))+C_{k}P_{k-1}(\Theta(x))\right]\mathbf{p}_0(x) \\
      &= A_kP_{k+1}(\Theta(x))\mathbf{p}_0(x)+B_{k}\mathbf{p}_k(x) + C_k \mathbf{p}_{k-1}(x). \label{eq:ThetaP_k_1}
   \end{align}
   In the last equality we used the induction hypothesis. By taking the right-hand side of \eqref{eq:P_kThetap0} multiplied by $\Theta(x)$, we have from \eqref{eq:recurrence_vector_polys} that
   \begin{equation}
      \label{eq:ThetaP_k_2}
      \Theta(x) \mathbf{p}_k(x) =  A_{k} \mathbf{p}_{k+1}(x)+B_{k}\mathbf{p}
      _{k}(x)+C_{k}\mathbf{p}_{k-1}(x).
   \end{equation}
   Comparing the right-hand sides of equations \eqref{eq:ThetaP_k_1} and \eqref{eq:ThetaP_k_2}, and using the invertibility of $A_k$ for all $k$, we obtain:
   $$P_{k+1}(\Theta(x))\mathbf{p}_0(x) = \mathbf{p}_{k+1}(x),$$
   which completes the induction step.
\end{proof}

\subsection{Orthogonality relations}
We now make the orthogonality relations for the matrix-valued orthogonal polynomials $P_k$ explicit. For every pair of matrix-valued polynomials $P, Q \in \mathbb{C}^{m\times m}[x]$, we define:
\begin{equation*}
\langle P, Q \rangle_{\Theta} = \int_{a}^{b} P(\Theta(x)) \, W(x) \, Q(\Theta(x))^{\ast} \, d\mu(x), \quad \text{with} \quad W(x)=\mathbf{p}_0(x)\mathbf{p}_0(x)^{\ast},  
\end{equation*}
where $\ast$ denotes the conjugate transpose. Therefore, we have a map 
$$\langle \cdot , \cdot \rangle_{\Theta}:\mathbb{C}^{m\times m}[x]\times \mathbb{C}^{m\times m}[x]\to \mathbb{C}^{m\times m}.$$ 
In the following theorem, we show that it is a matrix-valued inner product and that the polynomials $(P_j)$ are orthogonal with respect to it.
\begin{theorem}
\label{thm:orthogonalityPn}
   The map $\langle \cdot , \cdot \rangle_{\Theta}$ is a non-degenerate matrix-valued inner product. Specifically, it satisfies:
   \begin{enumerate}[(i)]
      \item $\langle TP+Q, R \rangle_{\Theta} =T \langle P, R \rangle_{\Theta} +
        \langle Q, R \rangle_{\Theta}$ for all $T\in \mathbb{C}^{m\times m}$ and $P,Q
        \in \mathbb{C}^{m\times m}[x]$.
      \item $\langle P, Q \rangle_{\Theta} = (\langle Q, P \rangle_{\Theta})^{\ast}$
        for all $P,Q\in \mathbb{C}^{m\times m}[x]$.
      \item If $\langle P, P \rangle_{\Theta} = 0$, then $P=0$.
    \end{enumerate}
    
   Moreover, the matrix-valued orthogonal polynomials $(P_{j})$ are orthogonal with respect to $\langle \cdot , \cdot \rangle_{\Theta}$:
   \[
      \langle P_{j}, P_{k} \rangle_{\Theta} = \delta_{j,k} \, H_{j}, 
      \qquad H_j=\begin{pmatrix}
         h_{mj} & 0 & \cdots & 0 \\ 0 & h_{mj+1} & \cdots & 0 \\ \vdots & \vdots & \ddots & \vdots \\ 0 & 0 & \cdots & h_{mj+m-1}
      \end{pmatrix}.
   \]
\end{theorem}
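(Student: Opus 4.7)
The plan is to exploit the rank-one factorization $W(x)=\mathbf{p}_0(x)\mathbf{p}_0(x)^{\ast}$ together with Proposition \ref{prop:matrix-to-scalar} to reduce every computation to the scalar orthogonality relations \eqref{eq:orthognality_pn}. The key observation is that the integrand can be written as $(P(\Theta(x))\mathbf{p}_0(x))(Q(\Theta(x))\mathbf{p}_0(x))^{\ast}$, so the form $\langle \cdot,\cdot\rangle_{\Theta}$ becomes a pairing of vector-valued polynomials, which Proposition \ref{prop:matrix-to-scalar} identifies with the vectors $\mathbf{p}_j(x)$ when applied to the $(P_j)$.

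First I would dispatch (i) and (ii), which are routine: left multiplication by a constant matrix $T\in \mathbb{C}^{m\times m}$ commutes with the integral and distributes over sums in the first argument, yielding (i); while taking the conjugate transpose inside the integral and using $W(x)^{\ast}=W(x)$ gives (ii). For the orthogonality of the $P_j$, I would apply Proposition \ref{prop:matrix-to-scalar} to both sides to rewrite
\[
   \langle P_j, P_k\rangle_{\Theta}=\int_a^b \mathbf{p}_j(x)\mathbf{p}_k(x)^{\ast}\, d\mu(x),
\]
whose $(i,\ell)$ entry is $\int_a^b p_{mj+i}(x)\,p_{mk+\ell}(x)\, d\mu(x)$. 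Scalar orthogonality makes this equal to $h_{mj+i}\,\delta_{mj+i,\, mk+\ell}$; when $j\neq k$ the indices $mj+i$ and $mk+\ell$ must differ, since $|i-\ell|\le m-1<m\le m|j-k|$, so the entry vanishes, while $j=k$ reproduces the diagonal matrix $H_j$.

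The main obstacle is the non-degeneracy property (iii). To handle it, I would set $\mathbf{v}(x)=P(\Theta(x))\mathbf{p}_0(x)$, so that $\langle P,P\rangle_{\Theta}=\int_a^b \mathbf{v}(x)\mathbf{v}(x)^{\ast}\, d\mu(x)$. Each integrand is positive semi-definite, and taking the trace converts the hypothesis into $\int_a^b \|\mathbf{v}(x)\|^2\, d\mu(x)=0$, which forces $\mathbf{v}(x)=0$ $\mu$-a.e. The delicate point is transferring this vanishing back to $P$ itself: I would expand $P=\sum_{k=0}^{n} \Gamma_k P_k$ using that $(P_k)$ is a simple sequence, and then apply Proposition \ref{prop:matrix-to-scalar} summand by summand to obtain $\sum_k \Gamma_k\mathbf{p}_k(x)=0$ $\mu$-a.e. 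Each entry of this vector identity is a linear combination of the \emph{distinct} scalar polynomials $p_0,p_1,\ldots,p_{mn+m-1}$, which are linearly independent in $L^2(\mu)$ by the orthogonality relation \eqref{eq:orthognality_pn}; hence every entry of every $\Gamma_k$ must vanish, giving $P=0$ as required.
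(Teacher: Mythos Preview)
Your argument is correct and follows essentially the same route as the paper: properties (i)--(ii) are immediate, the orthogonality of the $P_j$ is reduced via Proposition~\ref{prop:matrix-to-scalar} to the scalar relations \eqref{eq:orthognality_pn}, and non-degeneracy is handled after expanding $P=\sum_k\Gamma_kP_k$ in the simple sequence $(P_k)$. The only cosmetic difference lies in step (iii): the paper uses the already-established orthogonality to compute $\langle P,P\rangle_\Theta=\sum_k \Gamma_k H_k \Gamma_k^{\ast}$ directly as a sum of positive semidefinite matrices, concluding from $H_k>0$ that each $\Gamma_k=0$, whereas you pass through the trace to get $\mathbf{v}=0$ $\mu$-a.e.\ and then invoke $L^2(\mu)$ linear independence of the scalar $p_n$; the two arguments are equivalent.
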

\begin{proof}
Conditions (i) and (ii) follow immediately from the definition of $\langle \cdot , \cdot \rangle_{\Theta}$. Before proving (iii), we establish the orthogonality of the matrix-valued polynomials. This is a consequence of Proposition \ref{prop:matrix-to-scalar} and the definition of the weight matrix:
\begin{align*}
\langle P_{j}, P_{k} \rangle_{\Theta} &= \int_{a}^{b} P_{j}(\Theta(x)) \, W(x) \, P_{k}(\Theta(x))^{\ast} \, d\mu(x) \\
& = \int_{a}^{b} \mathbf{p}_j(x) \, \mathbf{p}_k(x)^{\ast} \, w(x) d\mu(x) = \delta_{j,k} \, H_j.
\end{align*}
 Next we show that $\langle P, P \rangle_{\Theta}$ is a positive semidefinite matrix for any $P\in \mathbb{C}^{m\times m}[x]$. Let $P$ be a matrix polynomial of degree $n$. Since the sequence of matrix orthogonal polynomials is a basis of $\mathbb{C}^{m\times m}[x]$, there exist matrices $\Gamma_{k}$, $k=0,\ldots,n$ such that $P(x)=\sum_{k=0}^{n}\Gamma_{k}P_{k}(x)$. Using the linearity properties of $\langle \cdot , \cdot \rangle_{\Theta}$ and the orthogonality relations we obtain
\begin{equation}
\label{eq:inner-product-P-P-Gammas}
   \langle P , P \rangle_{\Theta} = \Gamma_{n} H_{n} \Gamma_{n}^{\ast} +\Gamma
   _{n-1}H_{n-1}\Gamma_{n-1}^{\ast}+\cdots + \Gamma_{0} H_{0} \Gamma_{0}^{\ast}.
\end{equation}
Since $H_{j}$ is diagonal with positive entries, the matrix
$\Gamma_{j} H_{j} \Gamma_{j}^{\ast}$ is positive semidefinite for all $j=0,\ldots
,n$. Therefore, $\langle P , P \rangle_{\Theta}$ is a nonnegative matrix.

Moreover, if $\langle P , P \rangle_{\Theta} = 0$, then every summand on the right-hand side of \eqref{eq:inner-product-P-P-Gammas} must be zero. Since
$H_{j}$ is positive definite, we have $\Gamma_{j} = 0$ for all $j=0,\ldots
,n$. Therefore $P=0$.
\end{proof}

If $(P_n)$ is a sequence of $m\times m$ orthogonal polynomials with respect to the matrix inner product $\langle \cdot , \cdot \rangle_\Theta$, and $T$ is an arbitrary $m\times m$ invertible matrix, then $(P_nT^{-1})$ is a sequence of matrix orthogonal polynomials with respect to the matrix inner product
$$\langle P,Q \rangle = \int P(\Theta(x)) TW(x)T^\ast Q(\Theta(x))^\ast d\mu(x).$$
In this case, the sequences $(P_n)$ and $(P_nT^{-1})$ are said to be equivalent. We say that the weight matrix $W$ is reducible to weights of smaller size if there exists a constant matrix $T$ such that $TW(x)T^\ast$ splits into a block diagonal matrix. In particular, $W$ reduces to scalar weights if $TW(x)T^\ast$ is a diagonal matrix. A weight matrix $W$ is \emph{irreducible} if and only if the real vector space 
\[
\mathcal{A}(W) = \{ T \in M_N(\mathbb{C}) \mid T W(x)= W(x) T^* \quad \forall x \in \mathrm{supp}(\mu) \}.
\]
contains no element other than scalar multiples of the identity, see \cites{tiriao_zurrian_reducibility_2018, koelink_roman_reducibility_2016}.

We are  usually interested in sequences of matrix orthogonal polynomials that are not equivalent to a diagonal sequence of scalar orthogonal polynomials. For the $2\times 2$ matrix-valued Krawtchouk polynomials discussed in Section \ref{sec:Krawtchouk-MV}, this can be verified directly from the explicit expression of the weight matrix \eqref{eq:Krawtchouk-2x2-weight}.

If $T\in\mathcal{A}(W)$ then by \cite[Lemma 3.1]{koelink_roman_reducibility_2016} the
norms satisfy $TH_n=H_nT^\ast$ for all $n\in \mathbb{N}_0$. In the following proposition we prove that if the scalar norms in the diagonal of $H_n$ are sufficiently different, then any such $T$ is diagonal with real entries.

\begin{prop}
\label{prop:Tdiagonal}
Let $T\in\mathbb{C}^{m\times m}$ satisfy
\[
T H_n = H_n T^{*}\qquad\text{for all } n\ge 0,
\]
where $H_n$ is given in Theorem \ref{thm:orthogonalityPn}. If for every pair $i\neq j$ with $i,j\in\{0,\dots,m-1\}$ the ratio
\[
\frac{h_{mn+i}}{h_{mn+j}}
\]
is not constant as a function of $n$, then $T$ is diagonal. Moreover, the entries of $T$ are real.
\end{prop}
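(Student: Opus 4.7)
The plan is to turn the matrix identity $TH_n=H_nT^*$ into a system of scalar equations indexed by the entries $(i,j)$ of $T$ and the block index $n$, and then to exploit the hypothesis that certain ratios of the diagonal entries of $H_n$ vary with $n$. Since $H_n$ is diagonal, this entry-wise translation should be immediate and force very rigid conditions on $T$.

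First I would write $T=(t_{ij})_{0\le i,j\le m-1}$ and $H_n=\diag(h_{mn},h_{mn+1},\dots,h_{mn+m-1})$. Computing the $(i,j)$ entry of both sides, $TH_n=H_nT^*$ becomes
\begin{equation*}
t_{ij}\,h_{mn+j} \;=\; h_{mn+i}\,\overline{t_{ji}}, \qquad 0\le i,j\le m-1,\ n\ge 0.
\end{equation*}
Taking $i=j$ yields $t_{ii}h_{mn+i}=h_{mn+i}\overline{t_{ii}}$, and since $h_{mn+i}>0$ this gives $t_{ii}=\overline{t_{ii}}\in\mathbb{R}$ for each $i$.

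Next I would handle the off-diagonal case $i\neq j$. The displayed identity can be rewritten as
\begin{equation*}
t_{ij} \;=\; \overline{t_{ji}}\,\frac{h_{mn+i}}{h_{mn+j}}, \qquad n\ge 0,
\end{equation*}
so if $t_{ij}\neq 0$ or $t_{ji}\neq 0$ then the ratio $h_{mn+i}/h_{mn+j}$ would have to be constant in $n$, contradicting the hypothesis. Hence $t_{ij}=t_{ji}=0$ whenever $i\neq j$, so $T$ is diagonal; combined with the first step, its entries are real.

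I do not anticipate a substantial obstacle here: the argument is a direct entry-wise comparison, with the crucial input being only the non-constancy of $h_{mn+i}/h_{mn+j}$ in $n$. The only thing to be careful about is the logical order, namely first extracting the reality of the diagonal entries (from the $i=j$ case), and then separately deducing that the off-diagonal entries vanish (from the $i\neq j$ case), since the two conclusions rely on different features of the hypothesis.
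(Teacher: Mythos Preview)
Your proof is correct and essentially identical to the paper's: both translate $TH_n=H_nT^*$ entrywise into $t_{ij}\,h_{mn+j}=h_{mn+i}\,\overline{t_{ji}}$, read off $t_{ii}\in\mathbb{R}$ from the diagonal case, and for $i\neq j$ derive that a nonvanishing off-diagonal entry forces $h_{mn+i}/h_{mn+j}$ to be constant in $n$, contradicting the hypothesis.
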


\begin{proof}
Write $T=(t_{ij})_{i,j=0}^{m-1}$ and note that $H_n$ is diagonal with positive diagonal entries. 
Taking $(i,j)$-entries in the identity $T H_n = H_n T^*$ yields, for all $n\ge 0$,
\begin{equation}\label{eq:entry}
t_{i,j}\,h_{mn+j}\;=\;h_{mn+i}\,\overline{t_{j,i}}\qquad(0\le i,j\le m-1).
\end{equation}

For $i=j$ we get $t_{ii}h_{mn+i}=h_{mn+i}\overline{t_{ii}}$ for all $n$, hence $t_{i,i}\in\mathbb{R}$.

Next, fix $i\neq j$. If $t_{i,j}$ and $t_{j,i}$ are not both zero, then from \eqref{eq:entry} we obtain
\[
\frac{h_{mn+i}}{h_{mn+j}} \;=\; \frac{t_{i,j}}{\overline{t_{j,i}}}\,,
\]
which is independent of $n$ and contradicts the assumption. Hence, if for every $i\neq j$ the ratio $h_{mn+i}/h_{mn+j}$ is not constant in $n$, then all off--diagonal entries vanish and $T$ is diagonal.
\end{proof}

\begin{remark}
The norms of the classical Hermite polynomials are $h_n=\sqrt{\pi}2^n n!$ and satisfy 
$$\frac{h_{2n+j}}{h_{2n+i}} = 2^{j-i} \frac{(2n+j)!}{(2n+i)!},$$
which is not constant as a function of $n$ for $i\neq j$. By Proposition \ref{prop:Tdiagonal}, any $T\in \mathcal{A}(W)$ in this case is necessarily diagonal.
\end{remark}

Proposition \ref{prop:Tdiagonal} shows that under mild assumptions, the problem of determining the structure of $\mathcal{A}(W)$ can be reduced to the study of diagonal elements. In particular, this significantly narrows the search for possible symmetries of the weight matrix. However, in concrete examples one must still provide additional information, typically derived from explicit formulas of the underlying weight matrix, to conclude whether reducibility holds or not.

\subsection{Finite sequences of matrix-valued orthogonal polynomials}
\label{subsec:finite_sequences}
The finite dimensional matrix \( M_q \) defined in \eqref{eq:Mq_Ehrenfestmatricial} does not directly fit into the infinite-dimensional framework of \eqref{eq:block3diagmatrix} discussed in the previous section. Instead, consider a finite sequence of orthogonal polynomials \( p_0, \ldots, p_N \), satisfying a finite recurrence relation
\[
\nu_x p_n(\nu_x) = a_n p_{n+1}(\nu_x) + b_n p_n(\nu_x) + c_n p_{n-1}(\nu_x), \qquad 0 \leq n \leq N,
\]
with initial conditions \( p_0 = 1 \) and \( p_{-1} = 0 \), and an orthogonality relations of the form
\[
\sum_{x=0}^N p_n(\nu_x) p_m(\nu_x) w_x = h_n \delta_{n,m},
\]
where \( \{\nu_x \}\) is a finite set of points and \( w = (w_0, \ldots, w_N) \) is a probability vector with \( w_x \geq 0 \) and \( \sum_{x=0}^N w_x = 1 \). In this setting, the matrix \( J \) becomes a finite \( (N+1) \times (N+1) \) matrix. The results from the previous section carry over to this finite case with straightforward modifications.

\section{Random Walks Associated with Matrix‐Valued Orthogonal Polynomials}

Let $\{X_n\}_{n\ge0}$ be a discrete‐time birth–and‐death chain on $\mathbb{N}_0$ with one‐step transition probabilities
\[
J_{i,j} \;=\;\Pr\{X_{n+1}=j\mid X_n=i\}.
\]
Assume that $J_{i,j}=0$ for all $|i-j| >1$, and set:
\[
a_i = J_{i,i+1}, 
\quad b_i = J_{i,i}, 
\quad c_{i+1} = J_{i+1,i},
\]
so that the transition matrix $J=(J_{i,j})_{i,j\ge0}$ has the tridiagonal form \eqref{eq:TridiagMatrixJ}.  We also assume
\[
a_j>0,\quad c_{j+1}>0,\quad b_j\ge0\quad(j\ge0),
\qquad
a_j + b_j + c_j = 1\quad(j\ge1), \quad a_0+b_0=1.
\]
As usual, we let $(p_n)_{n\ge0}$ be the sequence of polynomials satisfying  the three‐term recurrence relation
\begin{equation*}
x\,p_n(x) = a_n\,p_{n+1}(x) +  b_n\,p_n(x) + c_n\,p_{n-1}(x),
\end{equation*}
with the initial condition $p_0 = 1$, $p_{-1}=0$. By Karlin–McGregor \cite{karlin1959random}, 
there exists a probability measure $\mu$ supported on $[-1,1]$ with respect to which the $p_n$ are orthogonal:
\begin{equation}\label{eq:orthogonality_pn}
\int_{-1}^1 p_n(x)\,p_m(x)\,d\mu(x) \;=\; h_n\,\delta_{n,m}.
\end{equation}
Iterating the relation $J\,\mathbf{p}(x) = x\,\mathbf{p}(x),$
where $\mathbf{p}(x) = (p_0(x),p_1(x),\dots)^\top$, yields the integral representation
\begin{equation}\label{eq:KarlinMcGregor_scalar}
(J^n)_{i,j} = 
\frac{1}{h_j}\int_{-1}^1 x^n\,p_i(x)\,p_j(x)\,d\mu(x).
\end{equation}

More generally, we assume that
\[
\Theta(x) \;=\;\sum_{k=0}^m\alpha_k\,x^k,
\quad \sum_{k=0}^m\alpha_k = 1,\qquad \alpha_m\neq 0,
\]
and that the matrix $\Theta(J)$ is again a stochastic matrix (with interactions up to $n$ steps).  Proceeding as in the three diagonal case, and using \eqref{eq:Jp=xp-theta}, we get a Karlin-McGregor representation
\begin{equation*}
\label{eq:KarlinMcgregor_5diag}
(\Theta(J)^n)_{i,j} = \frac{1}{h_j} \int_{-1}^{1} \Theta(x)^n\,p_{i}(x)\, p_{j}(x) \, d\mu(x).
\end{equation*}
This representation, discussed in \cite{grunbaum2007random}, can be expressed in terms of the matrix-valued orthogonal polynomials related
to the measure $\mu$ and the polynomials $\Theta$. In our setting the formula reads:
$$[\Theta(J)^n]_{i,j} = \left(\int_{-1}^{1} \Theta(x)^n\, P_{i}(\Theta(x))W(x)P_{j}(\Theta(x))^\ast \, d\mu(x)\right) H_j^{-1},$$
where $[\Theta(J)^n]_{i,j}$ denotes the $(i,j)$-th $m\times m$ block of the matrix $\Theta(J)^n$, and $W$ the matrix-valued weight given in Theorem \ref{thm:orthogonalityPn}.

\begin{figure}[t]
  \centering
  \begin{minipage}[t]{0.49\textwidth}
    \centering
    \includegraphics[width=\textwidth]{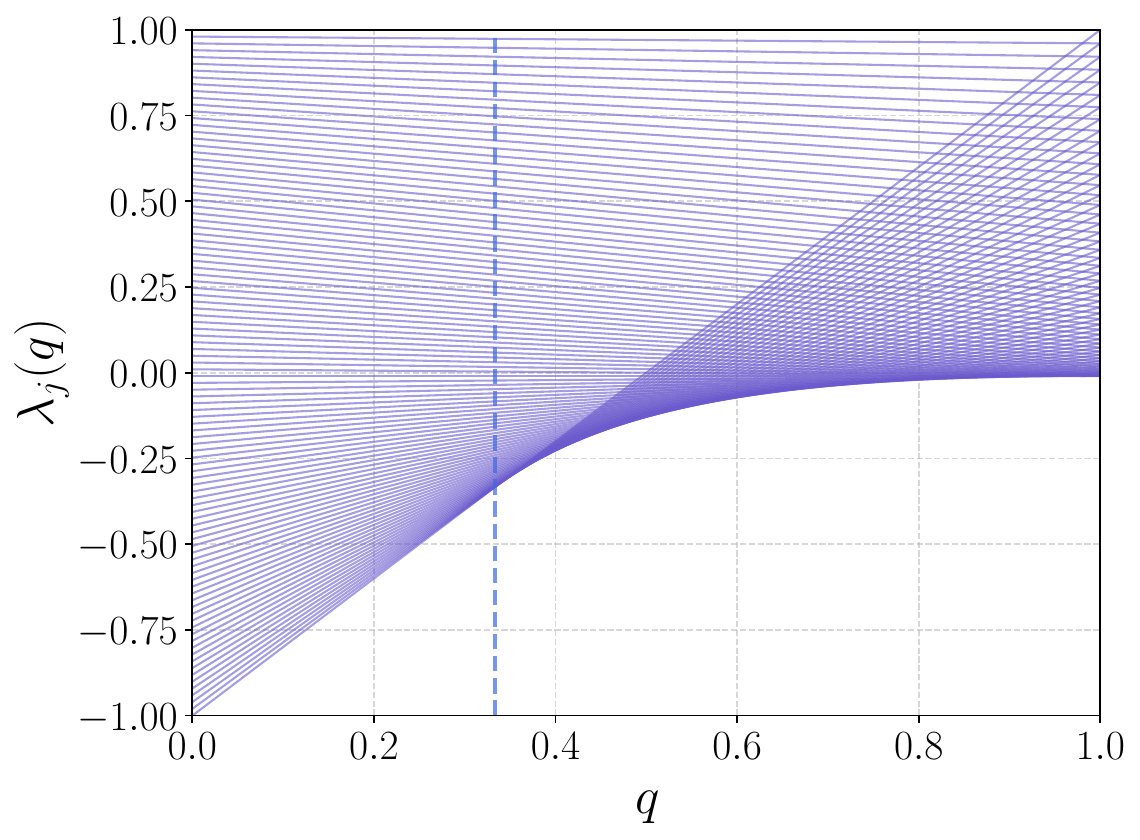}
  \end{minipage}\hfill
  \begin{minipage}[t]{0.49\textwidth}
    \centering
    \includegraphics[width=\textwidth]{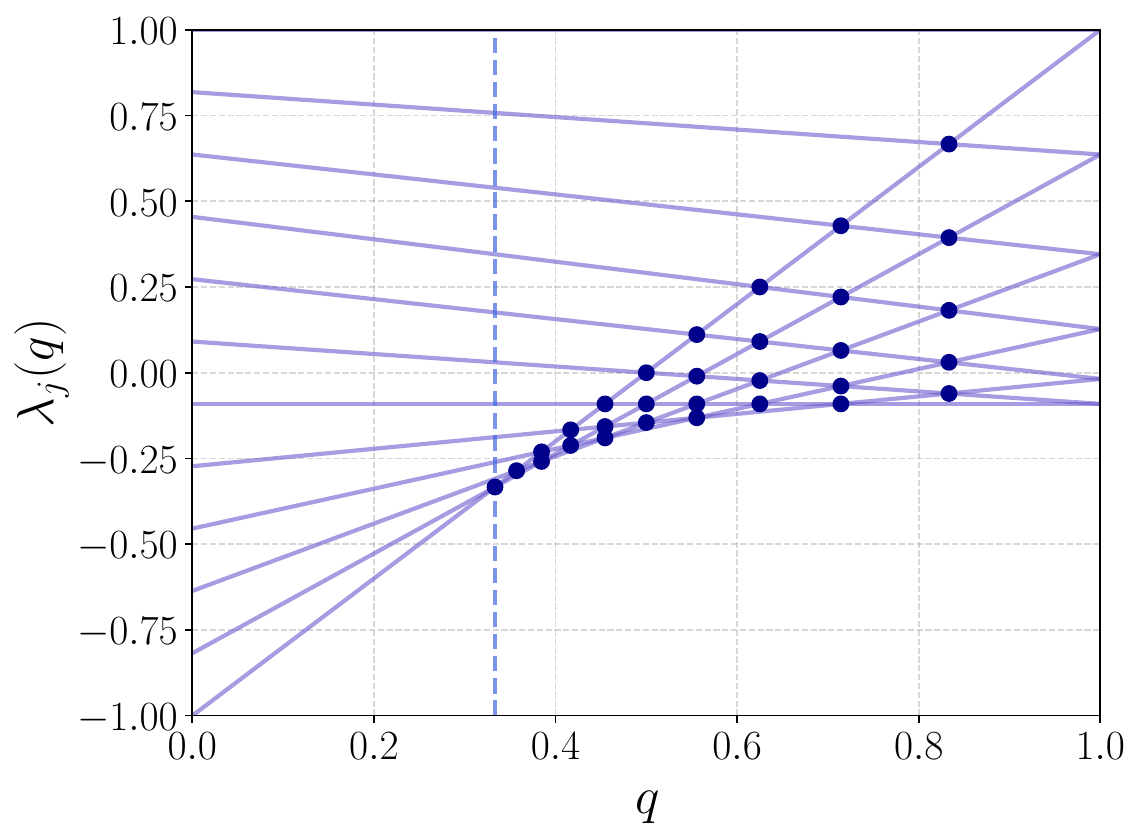}
  \end{minipage}
  \caption{Comparison of eigenvalue curves \(\lambda_j(q)\) for $N=101$ (left) and $N=11$ (right). The vertical dotted line corresponds to $q=1/3$. There are no double eigenvalues for $q<1/3$. The blue dots in the right plot indicate the double eigenvalues}
  \label{fig:eigenvalues}
\end{figure}

\section{The Ehrenfest model revisited}
\label{sec:Krawtchouk-MV}
In this section we reinterpret the block‐tridiagonal extensions of the Ehrenfest urn model from the viewpoint of matrix‐valued orthogonal polynomials. We show how the quadratic deformation
\[
M_q \;=\;\frac{qN}{N-1}\,M_0^2 + (1-q)\,M_0 \;-\;\frac{q}{N-1}
\]
leads naturally to a family of $2\times2$ weight matrices, and discuss irreducibility as well as limit cases.

\subsection{Spectral analysis}
By Proposition \ref{prop:Mq=M02}, the matrix-valued generalization of the Ehrenfest model given in the introduction can be viewed in the context of Section \ref{sec:MVOP}. If we take 
$$\Theta(x) = \frac{qN}{N-1} x^2 + (1-q) x - \frac{q}{N-1},$$
then we have that the $M_q = \Theta(M_0)$. Using that the eigenvalues of $M_0$ are $1-2j/N$, $j=0,\ldots, N$, we get  formula \eqref{eq:Theta-lambdaj-EhrenfestG} for the eigenvalues of $M_q$:
   \[
      \Theta(\lambda_{j})= \frac{2q (N-j)(N-2j-1)}{N(N-1)}+ \frac{2j-N}{N}, \quad j =
      0, \dots , N.
   \]
The case $q=0$ reduces to the classical Ehrenfest model. Note that $q=1/2$ corresponds to the model considered in \cite{grunbaum2009}. The following proposition provides a proof of the properties of the eigenvalues discussed in \cite[p. 272]{grunbaum2009}, and extends it to a general parameter $q$.
\begin{prop}
Let $\Omega$ be the set
$$\Omega = \{(3-2i/(N-1))^{-1} \, \colon \, i=0,\ldots,N-1 \}.$$
\begin{enumerate}
	\item If $q\notin \Omega$, then $M_q$ has a simple spectrum.
	\item If $q=(3-2i/(N-1))^{-1}$ with $i=0,\ldots,N-1$ and $i$ even, then there are exactly $i/2+1$ double eigenvalues. 
    \item If $q=(3-2i/(N-1))^{-1}$ with $i=0,\ldots,N-1$ and $i$ odd, then there are $(i+1)/2$ double eigenvalues.
\end{enumerate}
\end{prop}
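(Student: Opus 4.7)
The plan is to reduce the multiplicity question to a combinatorial count of integer pairs. By Proposition \ref{prop:Mq=M02}, $M_q = \Theta(M_0)$ where $\Theta$ is quadratic (for $q>0$), so the spectrum of $M_q$ is the multiset $\{\Theta(\lambda_j) : j = 0, \ldots, N\}$ with $\lambda_j = 1 - 2j/N$ pairwise distinct. The multiplicity of any eigenvalue $\mu$ of $M_q$ equals $\#\{j : \Theta(\lambda_j) = \mu\}$, so what I need is to classify all coincidences $\Theta(\lambda_j) = \Theta(\lambda_k)$ with $j\neq k$.

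Using the symmetry of a parabola, $\Theta(a) = \Theta(b)$ with $a\neq b$ iff $a+b = -(1-q)(N-1)/(qN)$. Substituting $a=\lambda_j$, $b=\lambda_k$ and simplifying turns this into the integer condition $j+k = N + (1-q)(N-1)/(2q)$. Hence for a coincidence to occur, the right-hand side must be a non-negative integer, and conversely, given an integer $s=j+k$ in the admissible range, the corresponding $q$ is forced to be $q = (N-1)/(2s-N-1)$. Setting $i = 2N-1-s$, the constraints $0\le j \neq k \le N$ restrict $s$ to $\{N,\ldots,2N-1\}$ and $i$ to $\{0,\ldots,N-1\}$, producing exactly $q = (3-2i/(N-1))^{-1}$; that is, $q\in\Omega$. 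This proves (1): for $q\notin\Omega$ (including $q=0$, where $\Theta$ is linear) no integer solution exists, so all $\Theta(\lambda_j)$ are distinct.

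For $q\in\Omega$ with the corresponding $s = 2N-1-i$, the double eigenvalues correspond bijectively to unordered pairs $\{j,k\}$ with $j\neq k$, $j+k = s$, $0 \le j,k \le N$. Two such pairs are either equal or disjoint (sharing one index forces equality of the other), so every coincidence is exactly a double eigenvalue; distinct pairs produce distinct double eigenvalues because $\Theta$ is injective on each side of its axis of symmetry. Multiplicities greater than two are ruled out because three pairwise distinct indices cannot all sum in pairs to the same value $s$.

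It remains to count the unordered pairs. For $j\le k$ with $j+k = s$ and $0\le j,k\le N$, the index $j$ ranges over $\max(0,s-N)\le j \le \lfloor s/2\rfloor$; since $s\ge N$ the lower bound is $s-N$. When $i$ is even, $s=2N-1-i$ is odd (as $N$ is odd), so $\lfloor s/2\rfloor = (s-1)/2$, there is no $j=k$ to discard, and the total count is $(s-1)/2 - (s-N) + 1 = i/2+1$, which is case (2). When $i$ is odd, $s$ is even; the interval $[s-N,s/2]$ contains $N - s/2 + 1$ integers, and subtracting the single case $j=s/2=k$ yields $(i+1)/2$, which is case (3). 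The main (though modest) obstacle is simply tracking parities and boundary cases correctly in this final tally; everything else follows from the parabolic symmetry of $\Theta$.
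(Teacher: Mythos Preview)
The paper states this proposition without proof, so there is nothing to compare against; your argument stands on its own and is essentially correct. The key idea---reducing coincidences $\Theta(\lambda_j)=\Theta(\lambda_k)$ to the parabola symmetry $\lambda_j+\lambda_k = -(1-q)(N-1)/(qN)$ and then to the integer condition $j+k=s$---is the natural approach, and your counting of unordered pairs in cases (2) and (3) is accurate (using that $N$ odd makes $s=2N-1-i$ have parity opposite to $i$).

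One small point of exposition: when you write that ``the constraints $0\le j\neq k\le N$ restrict $s$ to $\{N,\ldots,2N-1\}$,'' this is not quite the source of the lower bound. The pair constraints alone only give $1\le s\le 2N-1$; it is the standing assumption $q\in[0,1]$ from the model (together with $q=(N-1)/(2s-N-1)>0$) that forces $s\ge N$. Without that restriction on $q$, values $s<N$ would produce coincidences for $q>1$ and part (1) would fail as stated. This is implicit in the paper's setup, but it is worth making explicit in your argument. The remark that multiplicities exceed two is impossible could also be phrased more directly: $\Theta$ is quadratic, hence at most two-to-one.
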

\begin{remark}
In particular, for $q<1/3$, there are no double eigenvalues for any value of $N$. If $q=1/3$ then $M_q$ has only one double eigenvalue, if $q=1$, then every eigenvalue of $M_q$ is double. A plot of the eigenvalues of $M_q$ for $q\in[0,1]$ and different values of $N$ is given in Figure \ref{fig:eigenvalues}.
\end{remark}

\subsection{The associated matrix-valued orthogonal polynomials}
\label{sec:MVOP-Ehrenfestq}
Proceeding as in Subsection \ref{subsec:finite_sequences}, we construct a finite family of matrix-valued orthogonal polynomials associated to the block tridiagonal matrix \eqref{eq:Mq_Ehrenfestmatricial}. First, the three-term recurrence relation related to the matrix $M_0$ is given by
$$xp_n(x) =\frac{N-n}{N}p_{n+1}(x) + \frac{n}{N} p_{n-1}(x),$$
with the initial condition $p_0=1$, $p_{-1}=0$. Comparing this recurrence relation with that of the Krawtchouk polynomials \eqref{eq:recurrence_Krawtchouk}, we obtain
\begin{equation}
    \label{eq:p_nKrawtchouk}
p_n(x) = K_n\left( -\frac{N(x-1)}{2}\right)\quad \Longrightarrow \quad p_n(\lambda_x) = K_n(x), \quad \lambda_x=1-\frac{2x}{N}.
\end{equation}
The orthogonality relations for the polynomials $p_n$ now follow from 
\eqref{eq:Krawtchouk-orthogonality}:
\begin{equation}
    \sum_{x=0}^N p_n(\lambda_x) p_m(\lambda_x) w(x)= \delta_{nm} \pi_m^{-1},\qquad w(x) = \binom{N}{x}2^{-N}.
\end{equation}
If $N$ is odd, so that the matrix $M_q$ is a block $2\times 2$, the matrix-valued inner product is given by 
$$\langle P, Q \rangle = \sum_{x=0}^N P(\Theta(\lambda_x)) W(x)Q(\Theta(\lambda_x))^\ast,$$
where the weight matrix $W$ is given by
\begin{equation}
\label{eq:Krawtchouk-2x2-weight}
W(x) =w(x)\begin{pmatrix}
1 & \frac{N -2 x}{N} 
\\
 \frac{N -2 x}{N} & \frac{\left(N -2 x \right)^{2}}{N^{2}}
 \end{pmatrix}.
 \end{equation}
\begin{remark}
Note that the expression of the weight matrix is independent of $q$. However, the $q$-dependence is encoded in $\Theta(\lambda_x)$. In the case $q=1/2,$ this gives an explicit formula for the orthogonality relations in \cite[p. 274]{grunbaum2009}.
\end{remark}

\section{The $k$‐Ball Ehrenfest Model}
\label{sec:kball_ehrenfest}

We now consider the following generalization of the classical Ehrenfest urn model: 
Let $N$ labeled balls be distributed between two urns (urn A and urn B). At each discrete time step, exactly $k$ distinct balls are chosen uniformly at random and moved to the opposite urn. Let $X_n\in{0,1,\dots,N}$ be the number of balls in urn A after $n$ moves.

The process $\{X_n\}_{n\ge0}$ is a Markov chain on the state‐space $\{0,1,\dots,N\}$ with transition probabilities
\[
  P\bigl(X_{n+1}=j \mid X_n = i\bigr) \;=\;
  \begin{cases}
    \frac{\binom{i}{k-\ell}\,\binom{N-i}{\ell\,}}{\binom{N}{k}},
    & \text{if } j = i - k + 2\ell,\quad 0\leq \ell\leq k, \\
    0, & \text{otherwise}.
  \end{cases}
\]
In particular, transitions are only possible between states of the same parity as $i+k$. The transition matrix of the process is the $2k+1$-diagonal matrix:
\begin{equation}
\label{eq:transitionM-kball}
  J_k = \sum_{i=0}^N \sum_{\ell=0}^k  \frac{\binom{i}{k-\ell}\binom{N-i}{\ell\,}}{\binom{N}{k}} \, E_{i,i-k+2\ell}.
\end{equation}
\begin{remark}
    Note that $k=1$ reduces to the classical Ehrenfest model, so that $J_1$ is the transition matrix $M_0$ given in \eqref{eq:M_0} of Remark \eqref{rmk:Grunbaum_M0}.
    We note that the model described in Subsection \ref{sec:Ehrefest-with-q} corresponds to the process: At each time step choose a number $q\in[0,1]$; with probability $q$, perform the $2$-ball Ehrenfest model related to the matrix $J_2$; with probability $1-q$ perform the classical Ehrenfest model related to $J_1=M_0$. Therefore we have that:
    $$M_q = qJ_2 + (1-q)J_1.$$
\end{remark}
\begin{prop}
The matrices $J_k$ satisfy the recurrence relation
\begin{equation}
\label{eq:recurrence_Jk}
(k-N)J_{k+1} = kJ_{k-1}-NJ_1J_k,\qquad k\geq 1,
\end{equation}
with the initial condition $J_0=1$ and $J_1$ given by \eqref{eq:M_0}.
\end{prop}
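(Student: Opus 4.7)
The plan is to prove the equivalent rearrangement
\[
N\,J_1\,J_k \;=\; (N-k)\,J_{k+1} \;+\; k\,J_{k-1},
\]
by a direct probabilistic interpretation of the product $J_1 J_k$. I read $(J_1 J_k)_{i,j}$ as the probability that, starting from state $i$, one first performs a $1$-ball move (choosing a ball $b\in\{1,\dots,N\}$ uniformly at random) and then, independently, a $k$-ball move (choosing $S\in\binom{[N]}{k}$ uniformly). The key observation is that the net two-step effect on the urn configuration is to toggle exactly the balls in the symmetric difference $\{b\}\triangle S$.

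The argument then splits according to whether $b\in S$ or $b\notin S$. If $b\notin S$, the combined move transfers the $(k+1)$-set $S\cup\{b\}$; if $b\in S$, ball $b$ is moved once and then moved back as part of $S$, so only the $k-1$ balls in $S\setminus\{b\}$ change urns. A short counting argument gives $P(b\notin S)=(N-k)/N$ and $P(b\in S)=k/N$, and moreover, conditionally on each event, the set $S\cup\{b\}$ (resp.\ $S\setminus\{b\}$) is uniform over $(k+1)$-subsets (resp.\ $(k-1)$-subsets) of $[N]$. Substituting these conditional distributions back and matching them with the definitions \eqref{eq:transitionM-kball} of $J_{k+1}$ and $J_{k-1}$ yields
\[
J_1\,J_k \;=\; \frac{N-k}{N}\,J_{k+1} \;+\; \frac{k}{N}\,J_{k-1},
\]
which, after multiplication by $N$ and rearrangement, is the claimed recurrence.

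I expect the most delicate point to be the bookkeeping of the two uniform conditional distributions, which reduces to the elementary identities $(k+1)\binom{N}{k+1}=(N-k)\binom{N}{k}$ and $(N-k+1)\binom{N}{k-1}=k\binom{N}{k}$; once these are in place, the identification of the two conditional summands with $J_{k\pm1}$ is immediate. A purely algebraic alternative is also available: one fixes $i,j$ with $j=i-(k+1)+2\ell$, writes out both sides using the explicit formula $(J_k)_{r,s}=\binom{r}{k-\ell}\binom{N-r}{\ell}/\binom{N}{k}$ with $s=r-k+2\ell$, expands $(J_1J_k)_{i,j}$ through the tridiagonal structure of $J_1$, and applies Pascal's identity. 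That route is more tedious but conceptually routine and would serve as a useful cross-check.
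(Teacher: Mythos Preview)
Your probabilistic argument is correct and is genuinely different from the paper's proof. The paper proceeds exactly along the lines of what you sketch as the ``purely algebraic alternative'': it fixes $i$ and $j=i-(k+1)+2\ell$, expands $(kJ_{k-1}-NJ_1J_k)_{i,j}$ using the tridiagonal form of $J_1$ into three binomial terms, and then verifies by hand the binomial identity that collapses these to $(k-N)(J_{k+1})_{i,j}$. Your route instead interprets $J_1J_k$ as the two-step move ``toggle $b$, then toggle $S$'' and observes that the net effect is toggling $\{b\}\triangle S$; the case split $b\in S$ versus $b\notin S$ together with the uniformity of $S\cup\{b\}$ (resp.\ $S\setminus\{b\}$) conditionally on each event gives the recurrence without any entrywise computation. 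This is more conceptual and explains \emph{why} the Krawtchouk-type recurrence appears, whereas the paper's computation simply confirms it; on the other hand, the paper's approach requires nothing beyond the explicit formula \eqref{eq:transitionM-kball} and a single binomial identity, and avoids having to argue carefully about conditional uniformity.
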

\begin{proof}
We compare the entries on both sides of \eqref{eq:recurrence_Jk}. On the left it vanishes unless $j = i-(k+1)+2\ell$ for $0\leq \ell\leq k+1$. On the right we get three summands, one coming from $J_{k-1}$ and two from $J_1J_k$. Using the explicit form of $J_1$, we compute the right hand side of \eqref{eq:recurrence_Jk}:
\begin{multline}
\label{eq:recurrence_right}
(kJ_{k-1}-NJ_1J_{k})_{i,i-k+2\ell} = k(J_{k-1})_{i,i-(k-1)+2(\ell-1)} \\
-i (J_k)_{i-1,i-1-k+2\ell}-(N-i)(J_k)_{i+1,i+1-k+2(\ell-1)}.
\end{multline}
Substituting the explicit binomial expressions from \eqref{eq:transitionM-kball} into \eqref{eq:recurrence_right} gives
\begin{multline*}
\frac{k\binom{i}{k-\ell}\binom{N-i}{\ell-1}}{\binom{N}{k-1}}+\frac{i\binom{i-1}{k-\ell}\binom{N-i+1}{\ell}}{\binom{N}{k}} - 
\frac{(N-i)\binom{i+1}{k-\ell+1}\binom{N-i-1}{\ell-1}}{\binom{N}{k}} \\
= (k-N) \frac{\binom{i}{k-\ell+1}\binom{N-i}{\ell}}{\binom{N}{k+1}} = (k-N)(J_{k+1})_{i,i-(k+1)+2\ell}.
\end{multline*}
This completes the proof.
\end{proof}
The recurrence relation satisfied by the matrices \(J_k\) reveals the structure underlying the \(k\)-ball Ehrenfest processes. It shows that all transition matrices \((J_k)\) can be generated from the classical matrix \(J_1\) via a polynomial recurrence. This observation greatly simplifies the analysis of the spectral properties of each \(J_k\). 
\begin{cor}
    \label{cor:k_ball_matrix}
    For all $k\in \mathbb{N}_0$ we have
    \begin{equation}
    \label{eq:J_k-as-Krawtchouk}
    J_k = K_k\left( -\frac{N(J_1-1)}{2}\right).
    \end{equation}
    Hence, if we let $\mathbf{p}(x) = (K_0(x),\ldots , K_N(x))$, then
    $$J_k \mathbf{p}(j) = K_k(j) \, \mathbf{p}(j).$$
\end{cor}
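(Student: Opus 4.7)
The plan is to use the recurrence relation \eqref{eq:recurrence_Jk} established in the preceding proposition together with the three-term recurrence \eqref{eq:recurrence_Krawtchouk} satisfied by the scalar Krawtchouk polynomials. Since $(J_k)$ is determined by a second-order recursion in $k$ with two initial data $J_0$ and $J_1$, it suffices to exhibit another sequence satisfying the same recursion with the same initial values. My candidate is the matrix sequence $Y_k := K_k\!\left(-\tfrac{N(J_1-1)}{2}\right)$, well-defined by the functional calculus in $J_1$.

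For the induction step I would substitute $x = -\tfrac{N(J_1-1)}{2}$ (interpreted as a matrix) into the Krawtchouk recurrence
\[
-x\,K_k(x) = \tfrac{N-k}{2}K_{k+1}(x) - \tfrac{N}{2}K_k(x) + \tfrac{k}{2}K_{k-1}(x),
\]
which after multiplying by $2$ and simplifying the left-hand side $N(J_1-1)Y_k = NJ_1 Y_k - N Y_k$ becomes
\[
(k-N)\,Y_{k+1} \;=\; k\,Y_{k-1} - NJ_1\,Y_k,
\]
exactly the recurrence \eqref{eq:recurrence_Jk} satisfied by $(J_k)$. For the base cases, $Y_0 = K_0(\cdot) = I = J_0$ is immediate, and since $K_1(x) = 1 - \tfrac{2x}{N}$ one checks $Y_1 = I - \tfrac{2}{N}\cdot\bigl(-\tfrac{N(J_1-I)}{2}\bigr) = J_1$. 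By induction on $k$ the identity $J_k = Y_k$ follows, which is \eqref{eq:J_k-as-Krawtchouk}.

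For the second assertion, I would use \eqref{eq:lambda_j_Krawtchouk}, which says $J_1\,\mathbf{p}(j) = \lambda_j \mathbf{p}(j)$ with $\lambda_j = 1 - 2j/N$. Hence
\[
-\tfrac{N(J_1-I)}{2}\,\mathbf{p}(j) \;=\; -\tfrac{N(\lambda_j-1)}{2}\,\mathbf{p}(j) \;=\; j\,\mathbf{p}(j),
\]
so applying the polynomial $K_k$ to both sides (via functional calculus) yields $J_k\,\mathbf{p}(j) = K_k(j)\,\mathbf{p}(j)$, as required.

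I do not anticipate a substantive obstacle: the only things that could go wrong are sign or coefficient errors when rewriting the scalar Krawtchouk recurrence with a matrix argument, and making sure the initial condition $Y_1 = J_1$ is matched correctly. Both are purely computational. The conceptual content is entirely in the recurrence \eqref{eq:recurrence_Jk}, which has already been proved combinatorially.
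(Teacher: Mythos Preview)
Your proposal is correct and follows essentially the same approach as the paper: both arguments identify $J_k$ with $K_k\bigl(-\tfrac{N(J_1-1)}{2}\bigr)$ by matching the second-order recurrence \eqref{eq:recurrence_Jk} against the Krawtchouk three-term recurrence \eqref{eq:recurrence_Krawtchouk} together with the two initial conditions $J_0=I$, $J_1=J_1$, and then deduce the eigenvalue statement from the spectral decomposition \eqref{eq:lambda_j_Krawtchouk} of $J_1$. The paper phrases the first step by passing to a scalar recurrence in an indeterminate $x$ and comparing with \eqref{eq:p_nKrawtchouk}, whereas you substitute the matrix argument directly into the Krawtchouk recurrence; this is a purely cosmetic difference.
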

\begin{proof}
If we replace the matrix $J_k$ in  the recurrence relation \eqref{eq:recurrence_Jk} by a polynomial $p_k(x)$ and we replace $J_1$ by $x$, we get:
$$xp_k(x) = \frac{(N-k)}{k} p_{k+1}(x) + \frac{k}{N}p_k(x),$$
with the initial condition $p_0(x)=1$, and $p_1(x) = x$. This is the recurrence relation \eqref{eq:recurrence_Krawtchouk}, and proceeding as in Subsection \ref{sec:MVOP-Ehrenfestq}, we obtain that the polynomials $p_k$ are given by \eqref{eq:p_nKrawtchouk}. This gives \eqref{eq:J_k-as-Krawtchouk}.

The eigenvectors of $J_k$ are the eigenvectors of $J_1$, and  by \eqref{eq:lambda_j_Krawtchouk} the eigenvalues are 
$$K_k\left( -\frac{N(\lambda_j-1)}{2}\right), \qquad \lambda_j = 1-\frac{2j}{N}, \qquad j=0,\ldots, N.$$
This completes the proof.
\end{proof}

\section{Multi-ball Ehrenfest model}

\begin{figure}[t]
  \centering
  \begin{minipage}[t]{0.49\textwidth}
    \centering
    \includegraphics[width=\textwidth]{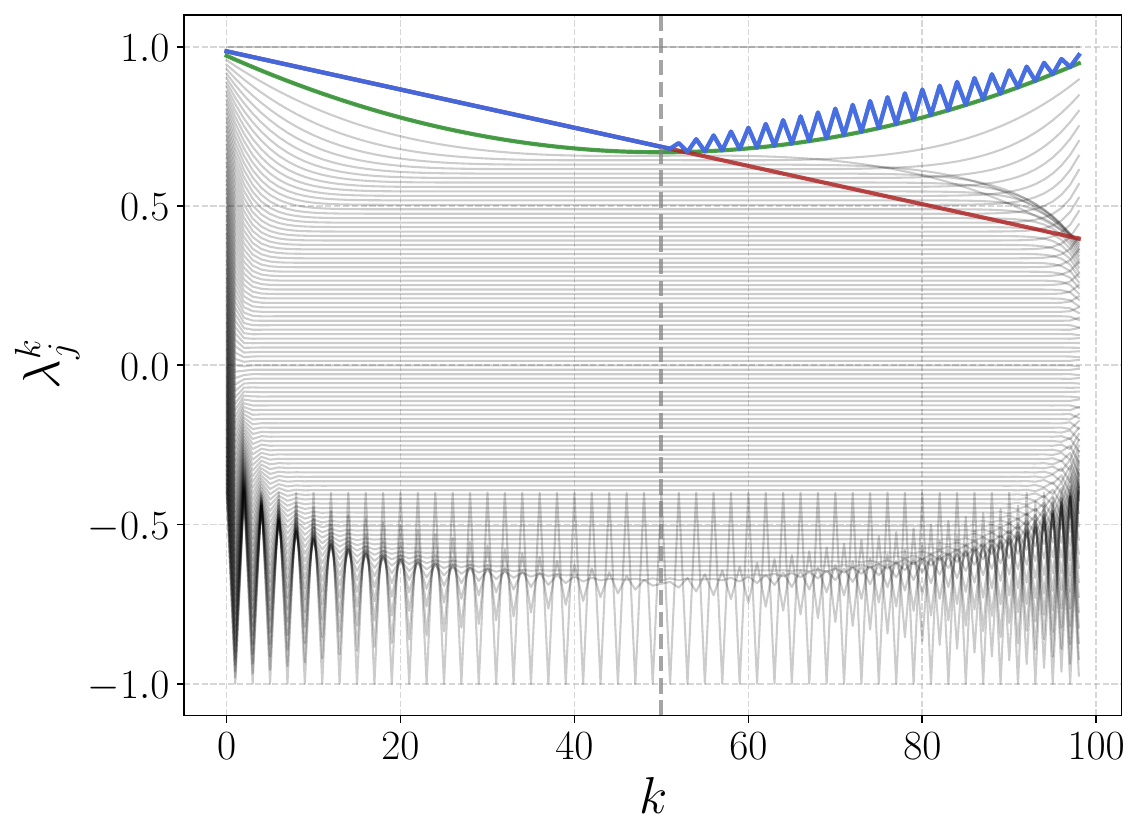}
  \end{minipage}\hfill
  \begin{minipage}[t]{0.49\textwidth}
    \centering
    \includegraphics[width=\textwidth]{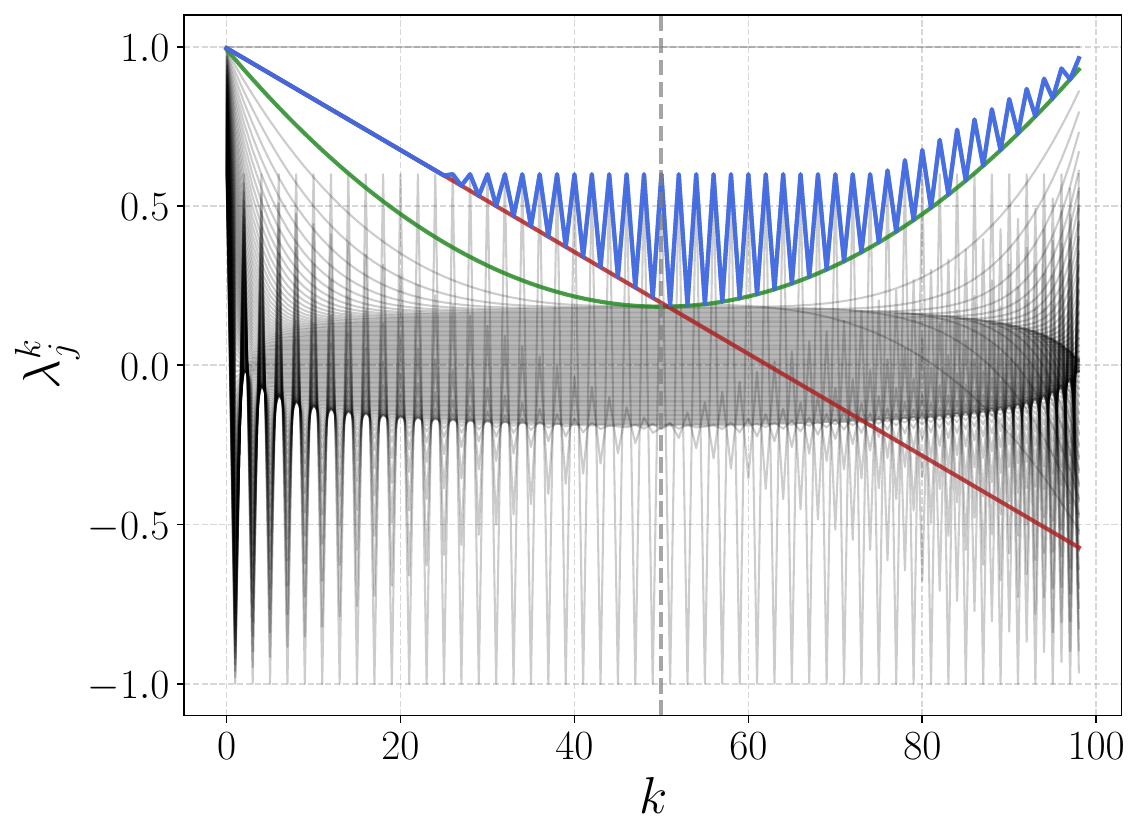}
  \end{minipage}
  \caption{Plot of the eigenvalues $\lambda_j^k$ for the Example \ref{ex:multiball-fig} given in \eqref{eq:lambda_multi_ball_example} for $N=100$, $q=0.3$ (left) and $q=0.8$ (right). The blue line is the absolute value of the largest eigenvalue different from 1. The red line is the plot of $\lambda_1^k$ and the green line is the eigenvalue $\lambda_2^k$. In grey we plot the remaining eigenvalues $\lambda^k_j$.}
  \label{fig:eigenvalues_in_k}
\end{figure}

At this point, a natural extension of the $k$-ball Ehrenfest model inspired in Subsection \ref{sec:Ehrefest-with-q} consists in combining the models for different $k$'s. Let $s\in \mathbb{N}$, and consider two vectors which are the set of parameters of the model:
\begin{equation}
    \label{eq:vectors-q-k}
\mathbf{q}=(q_1,\ldots, q_s),\qquad \mathbf{k} = (k_1,\ldots, k_s),
\end{equation}
where $\mathbf{q}$ is a probability vector, so that $q_i\geq 0$, and $q_1+\cdots+q_s = 1$, and $k_i \in \mathbb{N}$. The time evolution is defined as follows: At each time step $t$
\begin{enumerate}
\item We choose a label $i\in \{1,2,\ldots,s\}$ according to the probability vector $\mathbf{q}$.
\item We perform the $k_i$-ball Ehrenfest model.
\end{enumerate}
since the transition matrix for the  $k_i$-ball Ehrenfest model is given by Corollary \eqref{cor:k_ball_matrix}, the transition matrix of the new model is given by:
\begin{align*}
J_{\mathbf{q},\mathbf{k}} &= q_1J_{k_1} + \cdots + q_sJ_{k_s}\\
&=q_1 K_{k_1}\left( -\frac{N(J_1-1)}{2}\right) + \cdots +q_s K_{k_s}\left( -\frac{N(J_1-1)}{2}\right).
\end{align*}
The polynomial $\Theta_{\mathbf{q},\mathbf{k}}$ associated to this process is
$$\Theta_{\mathbf{q},\mathbf{k}}(x) = q_1 K_{k_1}\left( -\frac{N(x-1)}{2}\right) + \cdots +q_s K_{k_s}\left( -\frac{N(x-1)}{2}\right),$$
and the eigenvalues of $J_{\mathbf{q},\mathbf{k}}$ are given by
$$\Theta_{\mathbf{q},\mathbf{k}}(\lambda_j) = q_1 K_{k_1}(j) + \cdots +q_s K_{k_s}(j),\qquad j=0,\ldots,N,$$
where the $\lambda_j's$ are the eigenvalues of the transition matrix of the classical Ehrenfest model, which are given in \eqref{eq:Krawtchouk-orthogonality}.
Note that the stationary distribution of this multi-ball model is still the vector $\pi=(\pi_0,\ldots,p_N)$ discussed in the previous section.

\begin{remark}
    The Classical Ehrenfest model corresponds to the choice $\mathbf{q}=(1)$, $\mathbf{k} = (1)$, the $k$-ball Ehrenfest model is given by $\mathbf{q}=(1)$, $\mathbf{k} = (k)$, and the model in Subsection \ref{sec:Ehrefest-with-q} is given by $\mathbf{q}=(1-q,q)$, $\mathbf{k} = (1,2)$.
\end{remark}
\begin{example}
    \label{ex:multiball-fig}
    Let $\mathbf{q}=(1-q,q)$, $\mathbf{k} = (1,k)$. This is similar to 
    the model in Subsection \ref{sec:Ehrefest-with-q}, but performing a $k$-ball Ehrenfest instead of the $2$-ball model. In this case, the transition matrix $J_{\mathbf{q},\mathbf{k}}$ is a nine-diagonal banded matrix. Explicitly we have
    $$\Theta_{(1-q,q),(1,k)}(x) = (1-q) K_{1}\left( -\frac{N(x-1)}{2}\right) +  q K_{k}\left( -\frac{N(x-1)}{2}\right),$$
    and the corresponding eigenvalues are:
    \begin{equation}
    \label{eq:lambda_multi_ball_example}
    \lambda_j^k = (1-q) K_{1}(j)+q K_{k}(j).
    \end{equation}
\end{example}
In Figure \ref{fig:eigenvalues_in_k} we plot the eigenvalues of $J_{\mathbf{q},\mathbf{k}}$ for $q\in [0,1]$ and different values of $N$. We highlight the largest eigenvalue with absolute value strictly smaller than one.

\section{Conclusions}

The explicit linkage between scalar and matrix-valued orthogonal polynomials provides a concrete toolkit for analyzing random walks with interactions beyond nearest neighbors. Since the matrix-valued families we obtain are, generically, neither trivial nor reducible, while remaining computable through their scalar counterparts, this bridge yields nontrivial examples in which spectral and probabilistic quantities can be worked out explicitly.

The multi–ball Ehrenfest models exhibit a wide range of eigenvalue multiplicity patterns. These can be described and analyzed using the closed formulas for Krawtchouk polynomials, giving a transparent handle on when and how degeneracies occur across parameters. All these configurations can be exploited to study mixing times \cites{levin2017markov,nestoridi2017nonlocal}, since the explicit spectrum directly yields the spectral gap and the subdominant eigenvalues that govern convergence to stationarity.

The the set of parameters where multiple eigenvalues appear delineate “phase” regions in parameter space (in $q$, $k$, and $N$) separating simple versus multiple spectra. These descriptions invite asymptotic analyses (e.g., $N\to\infty$ with $k=k(N)$) of spectral gaps, possible cutoff phenomena, and the scaling of degeneracy loci.

The $k$–ball Ehrenfest transition matrix turns out to be exactly a Krawtchouk polynomial in the $1$–ball transition matrix. It is natural to ask whether this remarkable description persists for other physical models with suitable symmetries (or alternative orthogonal polynomial bases).

\bibliography{biblio}{} 

\end{document}